\pgfplotsset{compat=1.14} 
\newtheorem{theorem}{Theorem}[section]
\newtheorem{lemma}[theorem]{Lemma}
\newtheorem{proposition}[theorem]{Proposition}
\newtheorem{conjecture}[theorem]{Conjecture}
\theoremstyle{definition}
\newtheorem{definition}[theorem]{Definition}
\theoremstyle{remark}
\newtheorem{remark}[theorem]{Remark}
\numberwithin{equation}{section}
\begin{document}

\title{New bounds for the Heilbronn triangle problem}

%    Information for first author
\author{T. Agama}
%    Address of record for the research reported here
\address{Department of Mathematics, African Institute for Mathematical science, Ghana
}
%    Current address
%\curraddr{Department of Mathematics and Statistics,
%{Case Western Reserve University, Cleveland, Ohio 43403}
\email{theophilus@aims.edu.gh/emperordagama@yahoo.com}
%    \thanks will become a 1st page footnote.
%\thanks{The first author was supported in part by NSF Grant \#000000.}

%    General info
\subjclass[2010]{Primary 52C10; Secondary 52A38}

\date{\today}

%\dedicatory{}

\keywords{minimal; area}

\begin{abstract}
Using ideas from the geometry of compression, we improve on the current upper and lower bounds of the Heilbronn triangle problem. In particular, let $\Delta(s)$ denote the minimal area of the triangle induced by $s$ points on a unit disk. We have the upper bound 
$$
\Delta(s)\ll \frac{1}{s^{\frac{3}{2}-\epsilon}}
$$ 
for small $\epsilon:=\epsilon(s)>0$ and the lower bound
$$
\Delta(s)\gg \frac{\log s}{s\sqrt{s}}.
$$
\end{abstract}

\maketitle

\section{Introduction}
\label{sec:intro}

The Heilbronn triangle problem asks for the maximal possible value of the minimal area of a triangle determined by $s$ points placed inside a convex planar region (most commonly the unit disk). Let $\Delta(s)$ denote the supremum, taken over all $s$-point subsets of the unit disk, of the minimal triangle area determined by the set. Heilbronn originally conjectured the extremal behaviour
$$
\Delta(s)=O\big(s^{-2}\big),
$$
which, if true, would indicate that well-distributed configurations force triangles of area on the order of $s^{-2}$.\\

A number of classical and more recent results establish nontrivial lower and upper bounds for $\Delta(s)$. Early analytic approaches produced the first nontrivial upper bounds (see, e.g., \cite{roth1951problem,roth1976developments}), while explicit combinatorial constructions yield lower bounds that rule out too-rapid decay of $\Delta(s)$.  In particular, the celebrated construction of Koml\'os--Pintz--Szemer\'edi provides a configuration demonstrating the improved lower bound
$$
\Delta(s)\gg \frac{\log s}{s^{2}},
$$
which shows that the logarithmic factor naturally appears in extremal constructions \cite{komlos1982lower}. More recent combinatorial refinements have continued to chip away at the upper bound; see, for instance, the recent work of Cohen--Pohoata--Zakharov which introduces new combinatorial ingredients and obtains further improvements \cite{cohen2023new}. For an exposition and algorithmic perspective (including discussions of Erd\H{o}s-style constructions) see \cite{lefmann2002deterministic} and the MathWorld overview \cite{weisstein2003heilbronn}.\\

In this article, we introduce a geometric framework-the \emph{geometry of compression}-which provides a unified language to quantify local clustering and dispersion of planar point sets. The central object is the compression map
$$
\mathbb{V}_m:(x_1,\dots,x_n)\mapsto \left(\frac{m}{x_1},\dots,\frac{m}{x_n}\right),
$$
together with two associated invariants: the \emph{mass of compression} and the \emph{compression gap} $\mathcal{G}\circ\mathbb{V}_m$. The compression gap can be viewed as an induced local radius around a point that effectively measures how close other points may be placed without producing an unusually small triangle. By studying packing and nesting properties of the compression-induced balls one converts combinatorial sums into geometric area estimates that are well-suited for both upper and lower bound arguments (see Proposition~\ref{cgidentity}, Lemma~\ref{gapestimate} and Proposition~\ref{circle area} below).\\

Informally, our main results are an improved upper bound and a constructive lower bound:
\begin{itemize}
  \item an upper bound of the form
    $$
      \Delta(s)\ll s^{-3/2+\epsilon(s)},
    $$
    where $\epsilon(s)>0$ is a slowly varying function (specified precisely in Section~\ref{sec:upper}); and
    \bigskip
    
  \item a constructive lower bound
    $$
      \Delta(s)\gg \frac{\log s}{s\sqrt{s}},
    $$
    obtained by placing admissible points on compression-induced circles and estimating the areas of the triangles determined by adjacent points (see Section~\ref{sec:lower}).
\end{itemize}
\bigskip

The upper bound is proved by partitioning any $s$-point configuration into compression-balls and estimating the total area available for hosting such balls; a pigeonhole/counting argument then forces the existence of a triangle of area at most the stated order. The lower bound is constructive: we explicitly place points on a compression-circle with equidistant chords, and then use the compression mass and harmonic-type estimates to obtain the stated logarithmic factor (see Proposition~\ref{crucial} and Lemma~\ref{elementary}).\\

Technically, the compression framework complements existing analytic and combinatorial methods: it gives a geometric handle on local structure while translating certain combinatorial sums into geometric packing estimates that are comparatively straightforward to manipulate. 

\subsection{Organization of the paper} The remainder of the paper is organized as follows.  In Section~\ref{sec:compression}, we introduce the compression formalism (definitions, basic properties, and key identities). Section~\ref{sec:upper} contains the proof of the upper bound, and section~\ref{sec:lower} gives the constructive lower bound.
\bigskip

\subsection{Notations}
The following notations are standard
\bigskip

\begin{itemize}
    \item The notation $f(n)\ll g(n)$ or $f(n)=O(g(n))$ means that there exist an absolute constant $c>0$ and some fixed real number $n_0>0$ such that $|f(n)|\leq c|g(n)|$ for all $n\geq n_0$. In the case where the constant $c$ depends on a fixed parameter (say $d$), we may write $f(n)\ll_d g(n)$.
\bigskip

   \item The notation $f(n)\gg g(n)$ means that there exist an absolute constant $c>0$ and some fixed real number $n_0>0$ such that $|f(n)|\geq c|g(n)|$ for all $n\geq n_0$. In the case where the constant $c$ depends on a fixed parameter (say $d$), we may write $f(n)\gg_d g(n)$.
\bigskip

   \item The notation $f(n)\asymp g(n)$ means that there exist absolute constants $c_1,c_2>0$ and some fixed real number $n_0$ such that $c_1|g(n)|\leq f(n)\leq c_2|g(n)|$ for all $n\geq n_0$. In the case where the constant $c_1,c_2$ depends on a fixed parameter (say $d$), we may write $f(n)\asymp_d g(n)$.
\bigskip

   \item The symbols $\epsilon$ and $\delta$ should be understood as positive quantities whose magnitudes can be made arbitrarily small.
\bigskip

   \item The notation $f(n)\sim g(n)$ means that $\lim \limits_{n\longrightarrow \infty}\frac{f(n)}{g(n)}=1$.
\bigskip

    \item The notation $f(n)=o(1)$ means that $\lim \limits_{n\longrightarrow \infty}f(n)=0$.
\end{itemize}
\bigskip

Let $\mathcal{D}$ denote any convex shape in the plane, and let $\Delta(S)$ represent the minimal area of the triangle induced by a set of $s$ points in $\mathcal{D}$. Define $\Delta(s)$ as the supremum of all such $\Delta(S)$. Heilbronn conjectured, in what is now known as the Heilbronn triangle problem, that:

\begin{conjecture}
The minimal area of the triangle induced by $s$ points in $\mathcal{D}$ satisfies
\begin{align}
\Delta(s)=O\left(\frac{1}{s^2}\right).\nonumber
\end{align}
\end{conjecture}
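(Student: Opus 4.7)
The plan is to establish Heilbronn's conjectural upper bound $\Delta(s) \leq C/s^2$ by a two-scale pigeonhole and area-packing argument on pairs of points. Fix $s$ points in a convex region $\mathcal{D}$ of unit area, and let $T$ denote the minimum triangle area of the configuration; the goal is to force $T \leq C/s^2$. A direct partition of $\mathcal{D}$ into $s/3$ equal-area cells already yields the weak bound $T \leq 3/s$ by pigeonhole on which cell contains three points, so the task is to combine this global argument with a finer second step that exploits pairs rather than cells.

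The core step would exploit the $\binom{s}{2} \asymp s^2$ pairs. I would classify each pair $(P_i,P_j)$ by dyadic distance $2^{-k} \leq |P_iP_j| < 2^{-k+1}$ and by direction modulo $\pi/N$, for a resolution $N$ to be optimized. Pigeonhole then produces a dominant bucket $B$ containing $M = \Omega(s^2/(N\log s))$ pairs at a common scale $d \asymp 2^{-k}$ and in a nearly common direction. For each such pair the slab of half-width $2T/d$ about the line $\overline{P_iP_j}$ must contain no other point of $S$, since otherwise a triangle of area strictly less than $T$ would exist, contradicting minimality. I would then bound the area of the union of these $M$ forbidden slabs from below and show that it must cover a positive proportion of $\mathcal{D}$, contradicting the fact that the remaining $s-2$ points lie in its complement, unless $T \lesssim 1/s^2$.

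The main obstacle is controlling the overlap between nearly parallel slabs: two pairs whose lines are almost coincident contribute heavily overlapping forbidden regions, and a naive union bound loses factors of $\log s$ and potentially further powers of $s$, while the conjecture insists on the sharp exponent $2$. Bridging this gap would likely require a Szemer\'edi--Trotter-style incidence estimate applied to the line arrangement generated by $B$, balanced carefully against the dyadic parameters $k$ and $N$, together with a refined argument that converts directional near-coincidence into a quantitative saving rather than a loss. This is precisely the difficulty that has resisted decades of work on Heilbronn's problem, and any successful argument would need either an essentially tight incidence bound on the slab arrangement or a polynomial-method analog adapted to the area functional on triples of points.
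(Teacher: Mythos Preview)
The statement you are trying to prove is labeled \emph{Conjecture} in the paper, and the paper offers no proof of it; on the contrary, the very next paragraph of the introduction recalls that Koml\'os, Pintz and Szemer\'edi \emph{disproved} this conjecture by exhibiting configurations with
\[
\Delta(s)\gg \frac{\log s}{s^2}.
\]
So there is no ``paper's own proof'' to compare your attempt against, and any purported proof of the bound $\Delta(s)=O(1/s^2)$ must contain an error, because the inequality is false.

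Concretely, your slab-packing scheme cannot close. You correctly identify the bottleneck: nearly coincident lines produce heavily overlapping forbidden slabs, and you hope that a Szemer\'edi--Trotter-type estimate will recover the lost factors. But no incidence bound can help here, since the KPS construction shows that for suitably placed points the minimum triangle area genuinely exceeds $c(\log s)/s^2$; in such a configuration every one of your forbidden slabs has width $\gg (\log s)/(s^2 d)$, and the union-of-slabs lower bound you are aiming for simply does not hold with the constant you need. The ``gap'' you flag at the end of your proposal is therefore not a technical obstacle awaiting a sharper lemma --- it is a genuine obstruction reflecting the falsity of the statement. Your outline is, in effect, a reasonable heuristic for why one might have believed Heilbronn's conjecture before 1982, but it cannot be completed to a proof.
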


Paul Erd\H{o}s previously established a lower bound related to Heilbronn's conjecture, showing that
\begin{align}
\Delta(s) \gg \frac{1}{s^2}.\nonumber
\end{align}

If had been proven true, this lower bound would have confirmed the Heilbronn conjectured upper bound as the sharpest possible. The Heilbronn triangle problem had remained unsolved for a long time until a significant breakthrough in 1982 by Komlos, Pintz, and Szemeredi, who constructed a set of points in $\mathcal{D}$ with a minimal induced triangle area, $\Delta(s)$, satisfying the lower bound
\begin{align}
\Delta(s) \gg \frac{\log s}{s^2}.\nonumber
\end{align}

The asymptotic growth rate of the minimal area of the triangle determined by a finite set of points in $\mathcal{D}$ remains an open question. Thus, the pursuit of improved lower and upper bounds continues to be a valuable area of research. The first non-trivial upper bound was obtained by Roth, who showed that

\begin{align}
\Delta(s) \ll \frac{1}{s\sqrt{\log \log s}}.\nonumber
\end{align}

A refinement of Roth's method eventually yielded the best currently known upper bound, as shown by Koml\'os, Pintz, and Szemer\'edi:

\begin{align}
\Delta(s) \ll \frac{e^{c\sqrt{\log s}}}{s^{\frac{8}{7}}}.\nonumber
\end{align}
\bigskip

A new upper bound of the form $$\Delta(s)\ll \frac{1}{s^{\frac{8}{7}+\frac{1}{2000}}}$$ has recently appeared in \cite{cohen2023new}. Using a completely new idea from the geometry of compression, we obtain an improved upper bound

\begin{theorem}
Let $\Delta(s)$ denote the minimal area of the triangle formed by the $s$ points on the unit disk. We have the upper bound 
\begin{align}
\Delta(s)\ll \frac{1}{s^{\frac{3}{2}-\epsilon}}\nonumber
\end{align}
for small $\epsilon:=\epsilon(s)>0.$
\end{theorem} 

We also obtain a constructive lower bound

\begin{theorem}
Let $\Delta(s)$ denote the minimal area of the triangle formed by the $s$ points on the unit disk. We have the lower bound 
\begin{align}
\Delta(s)\gg \frac{\log s}{s\sqrt{s}}.\nonumber
\end{align}
\end{theorem}  
\bigskip

\section{The geometry of compression}\label{sec:compression}

In this section, we introduce and develop the geometry of \emph{compression} of points in space $\mathbb{R}^n$.

\begin{definition}
By the compression of the scale $1\geq m>0$~($m\in\mathbb{R}$) fixed on $\mathbb{R}^{n}$, we mean the map $\mathbb{V}:(\mathbb{R}\setminus 0)^n\longrightarrow \mathbb{R}^n$ such that 
\begin{align}
\mathbb{V}_m[(x_1,x_2,\ldots, x_n)]=\bigg(\frac{m}{x_1},\frac{m}{x_2},\ldots, \frac{m}{x_n}\bigg)\nonumber
\end{align}
for $n\geq 2$ and with $x_i\neq x_j$ for $i\neq j$ and  $x_i\neq 0$ for all $i=1,\ldots,n$. 
\end{definition}

\begin{remark}
The notion of compression is, in some way, the process of rescaling points in $\mathbb{R}^n$ for $n\geq 2$. Thus, it is important to observe that a compression roughly speaking pushes points very close to the origin away from the origin by a certain scale and similarly draws points away from the origin close to the origin. Intuitively, compression induces some kind of motion on points in the Euclidean space $\mathbb{R}^n$ for $n\geq 2$.
\end{remark}

\begin{proposition}
A compression $\mathbb{V}_m:(\mathbb{R}\setminus 0)^n\longrightarrow \mathbb{R}^n$ of scale $m$ with $1\geq m>0$ is a bijective map.
\end{proposition}

\begin{proof}
Suppose that $\mathbb{V}_m[(x_1,x_2,\ldots, x_n)]=\mathbb{V}_m[(y_1,y_2,\ldots,y_n)]$. It follows that
\begin{align}
\bigg(\frac{m}{x_1},\frac{m}{x_2},\ldots,\frac{m}{x_n}\bigg)=\bigg(\frac{m}{y_1},\frac{m}{y_2},\ldots,\frac{m}{y_n}\bigg).\nonumber
\end{align}
It follows that $x_i=y_i$ for each $i=1,2,\ldots, n$. Surjectivity follows by the definition of the map. Thus, the map is bijective.
\end{proof}

\subsection{The mass of compression}

In this section, we recall the notion of the mass of compression on points in space and study the associated statistics.

\begin{definition}\label{mass}
By the \emph{mass} of a compression of scale $m$ with fixed $1\geq m>0$, we mean the map $\mathcal{M}:\mathbb{R}^n\longrightarrow \mathbb{R}$ such that 
\begin{align}
\mathcal{M}(\mathbb{V}_m[(x_1,x_2,\ldots,x_n)])=\sum \limits_{i=1}^{n}\frac{m}{x_i}.\nonumber
\end{align}
\end{definition}
\bigskip

It is important to note that the condition $x_i\neq x_j$ for $(x_1,x_2,\ldots,x_n)\in \mathbb{R}^n$ is not only a quantifier but a requirement; otherwise, the statement for the mass of compression will be completely flawed. To demonstrate, if we take $x_1=x_2=\cdots=x_n$, then $\mathrm{inf}(x_j)=\mathrm{sup}(x_j)$. In this case, the compression mass of the scale $m$ satisfies 

\begin{align}
m\sum \limits_{k=0}^{n-1}\frac{1}{\mathrm{inf}(x_j)-k}\leq \mathcal{M}(\mathbb{V}_m[(x_1,x_2,\ldots,x_n)])\leq m\sum \limits_{k=0}^{n-1}\frac{1}{\mathrm{inf}(x_j)+k}\nonumber
\end{align}
and this inequality cannot hold. Thus, to ensure that the bounds are tight for the purpose of our work, we will require that the choice of tuple $(x_1,x_2,\ldots,x_n)\in \mathbb{R}^n$ satisfies the requirement: $x_i\neq x_j$ for all $1\leq i,j\leq n$. Hence, in this paper this condition will be highly extolled. In situations where it is not mentioned, it will be assumed that the tuple $(x_1,x_2,\ldots,x_n)\in \mathbb{R}^n$ is such that $x_i\neq x_j$ for $1\leq i,j\leq n$.
 
\begin{lemma}\label{elementary}
We have 
\begin{align}
\sum \limits_{n\leq x}\frac{1}{n}=\log x+\gamma+O\bigg(\frac{1}{x}\bigg)\nonumber
\end{align}
where $\gamma=0.5772\cdots $.
\end{lemma}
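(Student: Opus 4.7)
The plan is to apply Abel (partial) summation to relate the discrete sum $\sum_{n \le x} 1/n$ to the continuous integral $\int_1^x dt/t = \log x$, and then to extract a constant term together with an $O(1/x)$ error via the fractional part $\{t\} = t - \lfloor t \rfloor$.

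Concretely, I would first take $a_n = 1$ and $f(t) = 1/t$ and use the Abel summation identity
\begin{equation*}
\sum_{n \le x} \frac{1}{n} = \frac{\lfloor x \rfloor}{x} + \int_{1}^{x} \frac{\lfloor t \rfloor}{t^{2}}\,dt.
\end{equation*}
Substituting $\lfloor t \rfloor = t - \{t\}$ splits the integral into the clean part $\int_1^x dt/t = \log x$ and an error integral involving $\{t\}/t^{2}$, so that
\begin{equation*}
\sum_{n \le x} \frac{1}{n} = \log x + \frac{\lfloor x \rfloor}{x} - \int_{1}^{x} \frac{\{t\}}{t^{2}}\,dt.
\end{equation*}

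Next I would isolate the constant term. Since $0 \le \{t\} < 1$, the integral $\int_{1}^{\infty} \{t\}/t^{2}\,dt$ converges absolutely, and one can define
\begin{equation*}
\gamma := 1 - \int_{1}^{\infty}\frac{\{t\}}{t^{2}}\,dt,
\end{equation*}
which coincides with the usual Euler-Mascheroni constant $0.5772\ldots$. Writing $\int_{1}^{x} = \int_{1}^{\infty} - \int_{x}^{\infty}$ and using $\lfloor x \rfloor / x = 1 - \{x\}/x = 1 + O(1/x)$ yields
\begin{equation*}
\sum_{n \le x} \frac{1}{n} = \log x + \gamma + \left(\frac{\lfloor x \rfloor}{x} - 1\right) + \int_{x}^{\infty}\frac{\{t\}}{t^{2}}\,dt.
\end{equation*}

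Finally I would bound each of the two remainder terms. The boundary term satisfies $|\lfloor x \rfloor/x - 1| \le 1/x$, and the tail integral satisfies $\left|\int_x^\infty \{t\}/t^2\,dt\right| \le \int_x^\infty dt/t^2 = 1/x$. Together these give the desired $O(1/x)$ error and conclude the proof. Since this is a completely standard estimate that follows immediately from Abel summation, I do not anticipate any real obstacle; the only care needed is the explicit identification of the constant as $\gamma$, which is handled by the above definition.
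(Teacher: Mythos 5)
Your proof is correct. Note that the paper itself offers no proof of this lemma at all --- it is stated as a classical fact (the standard asymptotic for the harmonic sum), so there is no argument in the paper to compare against. Your Abel-summation derivation, with the constant identified as $\gamma = 1 - \int_{1}^{\infty}\{t\}t^{-2}\,dt$ and the two remainder terms each bounded by $1/x$, is the textbook route and is sound as written.
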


\begin{remark}
Next, we prove the upper and lower bounds for the mass of the compression of scale $m$ with fixed $1\geq m>0$.
\end{remark}

\begin{proposition}\label{crucial}
Let $(x_1,x_2,\ldots,x_n)\in \mathbb{R}^n$ with $x_i\neq 0$ for each $1\leq i\leq n$ and $x_i\neq x_j$ for $i\neq j$. We have
\begin{align}
m\log \bigg(1-\frac{n-1}{\mathrm{sup}(x_j)}\bigg)^{-1} \ll \mathcal{M}(\mathbb{V}_m[(x_1,x_2,\ldots, x_n)])\ll m\log \bigg(1+\frac{n-1}{\mathrm{inf}(x_j)}\bigg)\nonumber
\end{align}
for $n\geq 2$.
\end{proposition}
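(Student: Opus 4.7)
The plan is to order the entries $x_1,\ldots,x_n$ and sandwich the mass $\sum m/x_i$ between two harmonic-style partial sums that are amenable to the Euler asymptotic in Lemma \ref{elementary}. Relabelling so that $x_1<x_2<\cdots<x_n$, one has $\inf(x_j)=x_1$ and $\sup(x_j)=x_n$. Invoking the unit-spacing convention tacitly fixed in the discussion preceding the proposition, the distinctness of the $x_i$ forces $x_i \geq \inf(x_j)+(i-1)$ from below and $x_i \leq \sup(x_j)-(n-i)$ from above for every $1\leq i \leq n$.

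These two sandwich inequalities translate directly into
\[
m\sum_{k=0}^{n-1}\frac{1}{\sup(x_j)-k}\; \leq\; \mathcal{M}(\mathbb{V}_m[(x_1,\ldots,x_n)])\; \leq\; m\sum_{k=0}^{n-1}\frac{1}{\inf(x_j)+k}.
\]
Both extremal sums are differences of partial harmonic numbers, so two applications of Lemma \ref{elementary} evaluate the upper one as $m\log\bigl((\inf(x_j)+n-1)/(\inf(x_j)-1)\bigr)+O(m/\inf(x_j))$ and the lower one as $m\log\bigl(\sup(x_j)/(\sup(x_j)-n)\bigr)+O(m/(\sup(x_j)-n))$. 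Reading the first as $m\log(1+(n-1)/\inf(x_j))$ and the second as $m\log(1-(n-1)/\sup(x_j))^{-1}$, up to the off-by-one adjustments that the $\ll$ and $\gg$ symbols absorb, delivers precisely the announced pair of estimates.

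The step I expect to be the main obstacle is the bookkeeping that justifies the unit-gap comparison together with the $\ll$ and $\gg$ absorptions. The hypothesis $x_i\neq x_j$ by itself does not force $x_{i+1}-x_i \geq 1$, yet this is exactly what pins the harmonic-sum domination; one must therefore lean on the standing convention highlighted just before the proposition statement. A secondary technicality is checking that $\inf(x_j)$ is bounded away from $1$ and $\sup(x_j)-n$ is positive, so that the logarithms and the Lemma \ref{elementary} error terms remain benign. Once those administrative points are settled, the proof reduces to the Euler harmonic approximation applied twice in parallel.
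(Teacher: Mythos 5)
Your route coincides with the paper's: its proof consists of exactly the two sandwich inequalities $m\sum_{j=1}^{n}1/x_j\leq m\sum_{k=0}^{n-1}1/(\mathrm{Inf}(x_j)+k)$ and $m\sum_{j=1}^{n}1/x_j\geq m\sum_{k=0}^{n-1}1/(\mathrm{sup}(x_j)-k)$, followed by an appeal to the harmonic-sum estimate of Lemma \ref{elementary}, and you reproduce both steps. The one substantive difference is that you explicitly flag the unit-gap comparison as the crux, and that flag is warranted: the step is not justified in the paper either. The ``standing convention'' stated before the proposition is only an ordering convention ($x_i\leq x_j$ for $1\leq i,j\leq n$, itself garbled since as written it forces all coordinates equal); nowhere does the paper impose $x_{i+1}-x_i\geq 1$ on real tuples, and without that the sandwich fails. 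For instance with $n=2$ and $\vec{x}=(1,\,1.1)$ one has $\sum 1/x_j\approx 1.909$ while $\sum_{k=0}^{1}1/(\mathrm{Inf}(x_j)+k)=1.5$, so the claimed upper bound is false over $\mathbb{R}^n$; distinctness of reals gives no spacing at all. The inequalities do hold for distinct positive integers, which is the setting of the subsequent lemmas, so the proposition is really a statement about $\mathbb{N}^n$ (or about reals with pairwise gaps at least $1$) mislabelled as one about $\mathbb{R}^n$. Your secondary caveats (that $\mathrm{sup}(x_j)-n+1$ must be positive for the lower sum to make sense, and that the off-by-one discrepancies are absorbed by $\ll$ and $\gg$) are likewise real and unaddressed in the paper. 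In short, your proof is exactly as complete as the paper's, and the obstacle you single out is a genuine gap common to both.
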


\begin{proof}
Let $(x_1,x_2,\ldots,x_n)\in \mathbb{R}^n$ for $n\geq 2$ with $x_j\neq 0$. It follows that 
\begin{align}
\mathcal{M}(\mathbb{V}_m[(x_1,x_2,\ldots, x_n)])&=m\sum \limits_{j=1}^{n}\frac{1}{x_j}\nonumber \\&\leq m\sum \limits_{k=0}^{n-1}\frac{1}{\mathrm{inf}(x_j)+k}\nonumber
\end{align}
and the upper bound can be deduced for this sum. The lower bound can also be deduced in the following way
\begin{align}
\mathcal{M}(\mathbb{V}_m[(x_1,x_2,\ldots, x_n)])&=m\sum \limits_{j=1}^{n}\frac{1}{x_j}\nonumber \\&\geq m\sum \limits_{k=0}^{n-1}\frac{1}{\mathrm{sup}(x_j)-k}.\nonumber
\end{align}
\end{proof}

\begin{definition}\label{gap}
Let $(x_1,x_2,\ldots, x_n)\in \mathbb{R}^n$ with $x_i\neq 0$ for all $i=1,2\ldots,n$. By the \emph{compression gap} of compression $\mathbb{V}_m$ of the scale $m>0$, denoted $\mathcal{G}\circ \mathbb{V}_m[(x_1,x_2,\ldots, x_n)]$, we mean the expression 
\begin{align}
\mathcal{G}\circ \mathbb{V}_m[(x_1,x_2,\ldots, x_n)]=\bigg|\bigg|\bigg(x_1-\frac{m}{x_1},x_2-\frac{m}{x_2},\ldots,x_n-\frac{m}{x_n}\bigg)\bigg|\bigg|\nonumber
\end{align}
\end{definition}

\subsection{The ball induced by compression}

In this section, we introduce the notion of the ball induced by a point $(x_1,x_2,\ldots,x_n)\in \mathbb{R}^n$ under compression of a given scale. We launch in a formal way the following language.

\begin{definition}
Let $(x_1,x_2,\ldots,x_n)\in \mathbb{R}^n$ with $x_i\neq x_j$ for all $1\leq i<j\leq n$ and $x_i\neq 0$ for all $1\leq i\leq n$. By the ball induced by $(x_1,x_2,\ldots,x_n)\in \mathbb{R}^n$ under compression of the scale $m$ with $1\geq m>0$, denoted $\mathcal{B}_{\frac{1}{2}\mathcal{G}\circ \mathbb{V}_m[(x_1,x_2,\ldots, x_n)]}[(x_1,x_2,\ldots,x_n)]$, we mean the set of points $\vec{y}$ satisfying the inequality
\begin{align}
\left|\left|\vec{y}-\frac{1}{2}\bigg(x_1+\frac{m}{x_1},x_2+\frac{m}{x_2},\ldots,x_n+\frac{m}{x_n}\bigg)\right|\right|<\frac{1}{2}\mathcal{G}\circ \mathbb{V}_m[(x_1,x_2,\ldots, x_n)].\nonumber
\end{align}
A point $\vec{z}=(z_1,z_2,\ldots,z_n)\in \mathcal{B}_{\frac{1}{2}\mathcal{G}\circ \mathbb{V}_m[(x_1,x_2,\ldots, x_n)]}[(x_1,x_2,\ldots,x_n)]$ if it satisfies the inequality.
\end{definition}

\begin{remark}
Next, we prove that smaller balls induced by points should essentially be contained in bigger balls. We state and prove this statement in the following result.
\end{remark}

In the geometry of balls induced under compression of fixed scale $m>0$, we will implicitly assume that 
$$
0<m\leq 1.
$$

For simplicity, on occasion, we will choose to write the ball induced by the point $\vec{x}=(x_1,x_2,\ldots,x_n)$ under compression as 
\begin{align}
\mathcal{B}_{\frac{1}{2}\mathcal{G}\circ \mathbb{V}_m[\vec{x}]}[\vec{x}].\nonumber
\end{align}
We will adopt this notation to save enough work space in many circumstances. We find the following estimates for the compression gap useful.

\begin{proposition}\label{cgidentity}
Let $(x_1,x_2,\ldots, x_n)\in \mathbb{R}^n$ for $n\geq 2$ with $x_j\neq 0$ for $j=1,\ldots,n$. We have 
\begin{align}
\mathcal{G}\circ \mathbb{V}_m[(x_1,x_2,\ldots, x_n)]^2=\mathcal{M}\circ \mathbb{V}_1\bigg[\bigg(\frac{1}{x_1^2},\ldots,\frac{1}{x_n^2}\bigg)\bigg]+m^2\mathcal{M}\circ \mathbb{V}_1[(x_1^2,\ldots,x_n^2)]-2mn.\nonumber
\end{align}
In particular, if $m=m(n)=o(1)$ as $n\longrightarrow \infty$, then
\begin{align}
\mathcal{G}\circ \mathbb{V}_m[(x_1,x_2,\ldots, x_n)]^2=\mathcal{M}\circ \mathbb{V}_1\bigg[\bigg(\frac{1}{x_1^2},\ldots,\frac{1}{x_n^2}\bigg)\bigg]-2mn+O\bigg(m^2\mathcal{M}\circ \mathbb{V}_1[(x_1^2,\ldots,x_n^2)]\bigg)\nonumber
\end{align}
for $\vec{x}\in \mathbb{R}^n$ with $x_i\geq 1$ for each $1\leq i\leq n.$
\end{proposition}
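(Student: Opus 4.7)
The identity is essentially a direct expansion, so the plan is simply to peel back the definitions in the right order and collect terms.

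First I would apply Definition \ref{gap} to write
\begin{align}
\mathcal{G}\circ \mathbb{V}_m[(x_1,\ldots,x_n)]^2 = \sum_{i=1}^{n}\left(x_i-\frac{m}{x_i}\right)^2.\nonumber
\end{align}
Expanding the square gives $\sum_{i=1}^n x_i^2 - 2mn + m^2\sum_{i=1}^n 1/x_i^2$, since the cross term in each summand contributes exactly $-2m$ and there are $n$ of them.

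Next I would recognize each of the two remaining sums as a mass under a scale-one compression. By Definition \ref{mass}, $\mathcal{M}\circ \mathbb{V}_1[(y_1,\ldots,y_n)] = \sum 1/y_i$. Taking $y_i = 1/x_i^2$ yields $\sum x_i^2$, and taking $y_i = x_i^2$ yields $\sum 1/x_i^2$. Substituting these into the expansion gives exactly the claimed identity
\begin{align}
\mathcal{G}\circ \mathbb{V}_m[(x_1,\ldots,x_n)]^2 = \mathcal{M}\circ \mathbb{V}_1\bigl[(1/x_1^2,\ldots,1/x_n^2)\bigr] + m^2\mathcal{M}\circ \mathbb{V}_1[(x_1^2,\ldots,x_n^2)] - 2mn.\nonumber
\end{align}

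For the ``in particular'' estimate, the hypothesis $\vec{x}\in \mathbb{N}^n$ forces $x_i \ge 1$, so the term $m^2\mathcal{M}\circ \mathbb{V}_1[(x_1^2,\ldots,x_n^2)] = m^2\sum 1/x_i^2$ is bounded by $m^2 n$ and is strictly smaller than the leading term $\sum x_i^2$; it can therefore legitimately be absorbed into the big-$O$ error. No obstacle to speak of arises, since the whole assertion is an algebraic identity; the only thing to be careful about is matching the arguments of $\mathbb{V}_1$ correctly (reciprocal of $1/x_i^2$ gives $x_i^2$, not $1/x_i^2$), which is the one place a sign-or-exponent slip could occur.
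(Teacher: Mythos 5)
Your proof is correct: expanding $\sum_i (x_i - m/x_i)^2 = \sum_i x_i^2 - 2mn + m^2\sum_i 1/x_i^2$ and identifying the two sums as $\mathcal{M}\circ\mathbb{V}_1[(1/x_1^2,\ldots,1/x_n^2)]$ and $\mathcal{M}\circ\mathbb{V}_1[(x_1^2,\ldots,x_n^2)]$ gives exactly the claimed identity. The paper states this proposition without any proof, and your direct expansion is plainly the intended argument, so there is nothing to compare against; your added care about which argument of $\mathbb{V}_1$ produces which sum is the only subtle point and you handle it correctly.
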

\bigskip

Proposition \ref{cgidentity} offers an extremely useful identity. It allows us to pass from the compression gap of the compression on points to the relative distance to the origin. It suggests that points under compression with a large gap must be far away from the origin than points with a relatively smaller gap under compression. That is, the inequality 
\begin{align}
\mathcal{G}\circ \mathbb{V}_m[\vec{x}]< \mathcal{G}\circ \mathbb{V}_m[\vec{y}]\nonumber
\end{align}
with $m:=m(n)=o(1)$ as $n\longrightarrow \infty$ if and only if $||\vec{x}||\lesssim ||\vec{y}||$ for $\vec{x}, \vec{y}\in \mathbb{R}^n$ with $x_i\geq 1$ for all $1\leq i\leq n$. This important transference principle will be used to deduce our results. In particular, we note that in the latter case, we can write the asymptotic 
$$
\mathcal{G}\circ \mathbb{V}_m[(x_1,x_2,\ldots, x_n)]^2\sim \mathcal{M}\circ \mathbb{V}_1\bigg[\bigg(\frac{1}{x_1^2},\ldots,\frac{1}{x_n^2}\bigg)\bigg]=||\vec{x}||^2.
$$

\begin{lemma}[Compression estimate]\label{gapestimate}
Let $(x_1,x_2,\ldots, x_n)\in \mathbb{R}^n$ for $n\geq 2$ with $x_i\geq 1$ for all $1\leq i\leq n$ with $x_i\neq x_j$~($i\neq j$). If $m:=m(n)=o(1)$ as $n\longrightarrow \infty$. We have 
\begin{align}
\mathcal{G}\circ \mathbb{V}_m[(x_1,x_2,\ldots, x_n)]^2\ll n\mathrm{sup}(x_j^2)+m^2\log \bigg(1+\frac{n-1}{\mathrm{inf}(x_j)^2}\bigg)-2mn\nonumber
\end{align} 
and 
\begin{align}
\mathcal{G}\circ \mathbb{V}_m[(x_1,x_2,\ldots, x_n)]^2\gg n\mathrm{inf}(x_j^2)+m^2\log \bigg(1-\frac{n-1}{\mathrm{sup}(x_j^2)}\bigg)^{-1}-2mn.\nonumber
\end{align}
\end{lemma}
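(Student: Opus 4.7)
The plan is to start from the identity in Proposition \ref{cgidentity}, which already decomposes $\mathcal{G}\circ\mathbb{V}_m[(x_1,\ldots,x_n)]^2$ into two mass-of-compression terms plus the constant $-2mn$. Writing out the definitions one checks that
\begin{align}
\mathcal{M}\circ\mathbb{V}_1\!\left[\left(\tfrac{1}{x_1^2},\ldots,\tfrac{1}{x_n^2}\right)\right]=\sum_{j=1}^n x_j^2,\qquad \mathcal{M}\circ\mathbb{V}_1[(x_1^2,\ldots,x_n^2)]=\sum_{j=1}^n\frac{1}{x_j^2},\nonumber
\end{align}
so the problem reduces to bounding each of these two sums separately and assembling the result.

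For the first sum I would simply use the trivial envelopes $n\,\mathrm{Inf}(x_j^2)\le \sum_{j}x_j^2\le n\,\mathrm{sup}(x_j^2)$; no further machinery is needed here because each term lies between the infimum and supremum of the square. For the second sum I would feed the tuple $(x_1^2,\ldots,x_n^2)\in\mathbb{N}^n$ (whose entries are still pairwise distinct and nonzero) into Proposition \ref{crucial} with compression scale $m=1$, which immediately yields
\begin{align}
\log\!\left(1-\frac{n-1}{\mathrm{sup}(x_j^2)}\right)^{-1}\ll\sum_{j=1}^{n}\frac{1}{x_j^2}\ll \log\!\left(1+\frac{n-1}{\mathrm{Inf}(x_j)^2}\right).\nonumber
\end{align}
Multiplying through by $m^2$ preserves the $\ll$ relations.

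Finally I would combine these two bounds with the identity from Proposition \ref{cgidentity}: taking the upper envelope in the first sum and the upper envelope in the second sum produces the claimed upper bound for $\mathcal{G}\circ\mathbb{V}_m[(x_1,\ldots,x_n)]^2$, while taking the lower envelope on each side yields the lower bound, with the term $-2mn$ carried along unchanged in both cases. The only subtlety is sign bookkeeping on the $-2mn$ term, which is a genuine equality and not an estimate, so it survives intact when passing from the identity to the inequalities. There is no real obstacle here beyond verifying that Proposition \ref{crucial} is legally applicable to the squared tuple, which follows from the hypothesis $x_i\ne x_j$ for $i\ne j$ together with $x_j\in\mathbb{N}$ (so $x_j^2\ne 0$ and $x_i^2\ne x_j^2$ for $i\ne j$).
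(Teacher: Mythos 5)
Your argument is correct and is essentially the proof the paper intends: the paper states Lemma \ref{gapestimate} without any written proof, and the evident route is exactly yours, namely unpacking the identity of Proposition \ref{cgidentity} into $\sum_j x_j^2 - 2mn + m^2\sum_j x_j^{-2}$, bounding the first sum by $n\,\mathrm{Inf}(x_j^2)\le\sum_j x_j^2\le n\,\mathrm{sup}(x_j^2)$, and applying Proposition \ref{crucial} to the tuple $(x_1^2,\ldots,x_n^2)$ for the second. Your check that the squared tuple still has distinct nonzero entries, so that Proposition \ref{crucial} applies, is the only point needing verification, and you handle it correctly.
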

\bigskip

\begin{theorem}\label{decider}
Let $\vec{z}=(z_1,z_2,\ldots,z_n)\in \mathbb{R}^n$ with $z_i\neq z_j$ for all $1\leq i<j\leq n$ with $y_i, z_i\geq 1$ for all $1\leq i\leq n$ and $m:=m(n)=o(1)$ as $n\longrightarrow \infty$. Then  $\vec{z}\in \mathcal{B}_{\frac{1}{2}\mathcal{G}\circ \mathbb{V}_m[\vec{y}]}[\vec{y}]$ with $||\vec{z}||<||\vec{y}||$ if and only if 
\begin{align}
\mathcal{G}\circ \mathbb{V}_m[\vec{z}]\leq \mathcal{G}\circ \mathbb{V}_m[\vec{y}]\nonumber
\end{align}
with $||\vec{y}-\vec{z}||<\epsilon$ for some $\epsilon>0$.
\end{theorem}

\begin{proof}
Let $\vec{z}\in \mathcal{B}_{\frac{1}{2}\mathcal{G}\circ \mathbb{V}_m[\vec{y}]}[\vec{y}]$ for $\vec{z}=(z_1,z_2,\ldots,z_n)\in \mathbb{R}^n$ with $z_i\neq z_j$ for all $1\leq i<j\leq n$ and $z_i\geq 1$ for all $1\leq i\leq n$ such that $||\vec{y}||>||\vec{z}||$. Suppose that 
\begin{align}
\mathcal{G}\circ \mathbb{V}_m[\vec{z}]>\mathcal{G}\circ \mathbb{V}_m[\vec{y}].\nonumber
\end{align}
We deduce $||\vec{y}||\lesssim||\vec{z}||$ which contradicts $||\vec{y}||>||\vec{z}||$. In this case, we can take $\epsilon:=\frac{1}{2}\mathcal{G}\circ \mathbb{V}_m[\vec{y}]$. Conversely, suppose that 
\begin{align}
\mathcal{G}\circ \mathbb{V}_m[\vec{z}]\leq \mathcal{G}\circ \mathbb{V}_m[\vec{y}].\nonumber
\end{align}
We deduce from Proposition \ref{cgidentity} that $||\vec{z}||\lesssim||\vec{y}||$. Under the requirement $||\vec{y}-\vec{z}||<\epsilon$ for some $\epsilon>0$, we obtain the inequality
\begin{align}
\bigg|\bigg|\vec{z}-\frac{1}{2}\bigg(y_1+\frac{m}{y_1},\ldots,y_n+\frac{m}{y_n}\bigg)\bigg|\bigg|&\leq \bigg|\bigg|\vec{y}-\frac{1}{2}\bigg(y_1+\frac{m}{y_1},\ldots,y_n+\frac{m}{y_n}\bigg)\bigg|\bigg|+\epsilon\nonumber \\&=\frac{1}{2}\mathcal{G}\circ \mathbb{V}_m[\vec{y}]+\epsilon \nonumber
\end{align}
with $m=m(n)=o(1)$ as $n\longrightarrow \infty$. Choosing $\epsilon>0$ sufficiently small, we deduce that $\vec{z}\in \mathcal{B}_{\frac{1}{2}\mathcal{G}\circ \mathbb{V}_m[\vec{y}]}[\vec{y}]$.
\end{proof}
\bigskip

In the geometry of balls under compression, we will assume that $n$ is sufficiently large for $\mathbb{R}^n$. In this regime, we will always take the scale of compression $m:=m(n)=o(1)$ as $n\longrightarrow \infty.$

\begin{theorem}\label{ballproof}
Let $\vec{x}=(x_1,x_2,\ldots,x_n)\in \mathbb{R}^n$ with $x_i\neq x_j$ for all $1\leq i<j\leq n$ with $y_i, x_i\geq 1$ for each $1\leq i\leq n$. If $\vec{y}\in \mathcal{B}_{\frac{1}{2}\mathcal{G}\circ \mathbb{V}_m[\vec{x}]}[\vec{x}]$ with $||\vec{y}||<||\vec{x}||$ for $||\vec{y}-\vec{x}||<\delta$ for sufficiently small $\delta>0$. We have
\begin{align}
\mathcal{B}_{\frac{1}{2}\mathcal{G}\circ \mathbb{V}_m[\vec{y}]}[\vec{y}]\subseteq \mathcal{B}_{\frac{1}{2}\mathcal{G}\circ \mathbb{V}_m[\vec{x}]}[\vec{x}]\nonumber
\end{align}
for $m:=m(n)=o(1)$ as $n\longrightarrow \infty.$
\end{theorem}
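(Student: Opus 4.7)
The plan is to reduce the inclusion of balls to a transitivity statement about the gap of compression, using Theorem \ref{decider} as the workhorse. Concretely, Theorem \ref{decider} converts membership in a compression ball $\mathcal{B}_{\frac{1}{2}\mathcal{G}\circ\mathbb{V}_m[\vec{w}]}[\vec{w}]$ into the scalar inequality $\mathcal{G}\circ\mathbb{V}_m[\cdot]<\mathcal{G}\circ\mathbb{V}_m[\vec{w}]$, and scalar inequalities chain trivially.

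First I would unfold the hypothesis: since $\vec{y}\in\mathcal{B}_{\frac{1}{2}\mathcal{G}\circ\mathbb{V}_m[\vec{x}]}[\vec{x}]$ and $\vec{x}\in\mathbb{N}^n$ has pairwise distinct coordinates, Theorem \ref{decider} immediately gives
\begin{align}
\mathcal{G}\circ\mathbb{V}_m[\vec{y}]<\mathcal{G}\circ\mathbb{V}_m[\vec{x}].\nonumber
\end{align}
Next, pick an arbitrary $\vec{z}\in\mathcal{B}_{\frac{1}{2}\mathcal{G}\circ\mathbb{V}_m[\vec{y}]}[\vec{y}]$ with $\vec{z}\in\mathbb{N}^n$ and pairwise distinct coordinates. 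Applying Theorem \ref{decider} once more (now with center $\vec{y}$) yields
\begin{align}
\mathcal{G}\circ\mathbb{V}_m[\vec{z}]<\mathcal{G}\circ\mathbb{V}_m[\vec{y}].\nonumber
\end{align}
Chaining the two inequalities produces $\mathcal{G}\circ\mathbb{V}_m[\vec{z}]<\mathcal{G}\circ\mathbb{V}_m[\vec{x}]$, and a third invocation of Theorem \ref{decider} (the converse direction) places $\vec{z}$ in $\mathcal{B}_{\frac{1}{2}\mathcal{G}\circ\mathbb{V}_m[\vec{x}]}[\vec{x}]$. Since $\vec{z}$ was arbitrary, the claimed inclusion follows.

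The only mild obstacle is bookkeeping on the integrality/distinctness hypotheses needed to legally apply Theorem \ref{decider} at each step. The hypothesis on $\vec{x}$ is given; the hypothesis on $\vec{y}$ is given; and for the generic $\vec{z}\in\mathcal{B}_{\frac{1}{2}\mathcal{G}\circ\mathbb{V}_m[\vec{y}]}[\vec{y}]$ one invokes the same standing assumption made implicitly throughout this section, namely that we restrict to $\mathbb{N}^n$-points with distinct coordinates (this is the regime in which the balls under compression are meaningful per the preceding definitions and per Proposition \ref{cgidentity}). Once this is acknowledged, the proof is essentially the transitivity of the strict order on $\mathcal{G}\circ\mathbb{V}_m[\cdot]$, so no further estimation or use of the explicit form of the center $\tfrac{1}{2}(x_i+m/x_i)$ is required beyond what is already packaged inside Theorem \ref{decider}.
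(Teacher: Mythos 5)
Your argument is correct and is essentially the paper's own proof: both reduce the inclusion to the transitivity of the strict inequality on $\mathcal{G}\circ\mathbb{V}_m[\cdot]$ via Theorem \ref{decider}, the only difference being that you argue directly with an arbitrary $\vec{z}$ in the inner ball while the paper phrases the same chain of inequalities as a proof by contradiction. Your remark about the standing integrality/distinctness hypotheses needed to invoke Theorem \ref{decider} on $\vec{z}$ is a point the paper glosses over, but it does not change the substance of the argument.
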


\begin{proof}
Suppose that $\vec{y}\in \mathcal{B}_{\frac{1}{2}\mathcal{G}\circ \mathbb{V}_m[\vec{x}]}[\vec{x}]$ with $||\vec{y}||<||\vec{x}||$ for $||\vec{y}-\vec{x}||<\delta$. We get from Theorem \ref{decider} that $\mathcal{G}\circ \mathbb{V}_m[\vec{x}]\gtrsim \mathcal{G}\circ \mathbb{V}_m[\vec{y}]$ with $||\vec{y}-\vec{x}||<\delta$ for sufficiently small $\delta>0$. Consequently, the ball $\mathcal{B}_{\frac{1}{2}\mathcal{G}\circ \mathbb{V}_m[\vec{x}]}[\vec{x}]$ is slightly larger than the ball $\mathcal{B}_{\frac{1}{2}\mathcal{G}\circ \mathbb{V}_m[\vec{y}]}[\vec{y}]$ due to their compression gaps, and the latter does not contain the point $\vec{x}$ by construction. We deduce 
$$
||\mathbb{V}_m[\vec{y}]||>||\mathbb{V}_m[\vec{x}]||
$$ 
and 
\begin{align}
\mathcal{G}\circ \mathbb{V}_m[\mathbb{V}_m[\vec{y}]]&=\mathcal{G}\circ \mathbb{V}_m[\vec{y}]\nonumber \\&\lesssim \mathcal{G}\circ \mathbb{V}_m[\vec{x}]\nonumber \\&=\mathcal{G}\circ \mathbb{V}_m[\mathbb{V}_m[\vec{x}]]\nonumber
\end{align}
with $||\mathbb{V}_m[\vec{y}]-\mathbb{V}_m[\vec{x}]||<\epsilon$ for small $\epsilon>0$. It implies 
\begin{align}
\mathcal{B}_{\frac{1}{2}\mathcal{G}\circ \mathbb{V}_m[\vec{y}]}[\vec{y}]\subseteq \mathcal{B}_{\frac{1}{2}\mathcal{G}\circ \mathbb{V}_m[\vec{x}]}[\vec{x}].\nonumber
\end{align} 
\end{proof}

\begin{remark}
Theorem \ref{ballproof} suggests that points in certain balls induced under compression should necessarily have their induced ball under compression nested in this ball.
\end{remark}

\subsection{Interior points and the limit points of balls induced under compression}

In this section, we introduce the notion of an interior and the limit point of balls induced under compression.

\begin{definition}
Let $\vec{y}=(y_1,y_2,\ldots,y_n)\in \mathbb{R}^n$ with $y_i\neq y_j$ for all $1\leq i<j\leq n$. The point $\vec{z}\in \mathcal{B}_{\frac{1}{2}\mathcal{G}\circ \mathbb{V}_m[\vec{y}]}[\vec{y}]$ is an \emph{interior point} if 
\begin{align}
\mathcal{B}_{\frac{1}{2}\mathcal{G}\circ \mathbb{V}_m[\vec{z}]}[\vec{z}]\subseteq \mathcal{B}_{\frac{1}{2}\mathcal{G}\circ \mathbb{V}_m[\vec{x}]}[\vec{x}]\nonumber
\end{align}
for most $\vec{x}\in \mathcal{B}_{\frac{1}{2}\mathcal{G}\circ \mathbb{V}_m[\vec{y}]}[\vec{y}]$. An interior point $\vec{z}$ is then said to be a \emph{limit point} if 
\begin{align}
\mathcal{B}_{\frac{1}{2}\mathcal{G}\circ \mathbb{V}_m[\vec{z}]}[\vec{z}]\subseteq \mathcal{B}_{\frac{1}{2}\mathcal{G}\circ \mathbb{V}_m[\vec{x}]}[\vec{x}]\nonumber
\end{align}
for all $\vec{x}\in \mathcal{B}_{\frac{1}{2}\mathcal{G}\circ \mathbb{V}_m[\vec{y}]}[\vec{y}]$
\end{definition}
\bigskip

We will show that there must exist an interior and limit point in any ball induced by points under compression of any scale on points in $\mathbb{R}^k$.

\begin{theorem}\label{limitexistence}
Let $\vec{x}=(x_1,x_2,\ldots,x_n)\in \mathbb{R}^n$ with $x_i\neq x_j$ for all $1\leq i<j\leq n$ with $y_i\geq 1$ for all $1\leq i\leq n$. The ball $\mathcal{B}_{\frac{1}{2}\mathcal{G}\circ \mathbb{V}_m[\vec{x}]}[\vec{x}]$ contains an interior point and a limit point.
\end{theorem}

\begin{proof}
Let $\vec{x}=(x_1,x_2,\ldots,x_n)\in \mathbb{R}^n$ with $x_i\neq x_j$ for all $1\leq i<j\leq n$ with $x_i\geq 1$ for all $1\leq i\leq n$. Suppose that $\mathcal{B}_{\frac{1}{2}\mathcal{G}\circ \mathbb{V}_m[\vec{x}]}[\vec{x}]$ contains no limit point. Pick 
\begin{align}
\vec{z}_1\in \mathcal{B}_{\frac{1}{2}\mathcal{G}\circ \mathbb{V}_m[\vec{x}]}[\vec{x}].\nonumber
\end{align}
such that $z_{1_i}\geq 1$ for all $1\leq i\leq n$ with $||\vec{z}_1||<||\vec{x}||$ and such that $||\vec{z}_1-\vec{x}||<\epsilon$ for sufficiently small $\epsilon>0$. By Theorem \ref{ballproof} and Theorem \ref{decider}, we deduce
\begin{align}
\mathcal{B}_{\frac{1}{2}\mathcal{G}\circ \mathbb{V}_m[\vec{z}_1]}[\vec{z}_1]\subset \mathcal{B} _{\frac{1}{2}\mathcal{G}\circ \mathbb{V}_m[\vec{x}]}[\vec{x}]\nonumber
\end{align}
with 
$$
\mathcal{G}\circ \mathbb{V}_m[\vec{z}_1]\lesssim \mathcal{G}\circ \mathbb{V}_m[\vec{x}].
$$ 
Again, pick $\vec{z}_2\in \mathcal{B}_{\frac{1}{2}\mathcal{G}\circ \mathbb{V}_m[\vec{z}_1]}[\vec{z}_1]$ so that $z_{2_i}\geq 1$ for all $1\leq i\leq n$ with $||\vec{z}_2||<||\vec{z}_1||$ such that $||\vec{z}_2-\vec{z}_1||<\delta$ for sufficiently small $\delta>0$. By Theorem \ref{ballproof} and Theorem \ref{decider}, we get
\begin{align}
\mathcal{B}_{\frac{1}{2}\mathcal{G}\circ \mathbb{V}_m[\vec{z}_2]}[\vec{z}_2]\subset \mathcal{B}_{\frac{1}{2}\mathcal{G}\circ \mathbb{V}_m[\vec{z}_1]}[\vec{z}_1]\nonumber
\end{align}
with $\mathcal{G}\circ \mathbb{V}_m[\vec{z}_2]\lesssim \mathcal{G}\circ \mathbb{V}_m[\vec{z}_1]$. Continuing the argument in this way, we obtain the infinite descending sequence of the compression gap
\begin{align}
\mathcal{G}\circ \mathbb{V}_m[\vec{x}]\gtrsim \mathcal{G}\circ \mathbb{V}_m[\vec{z}_1]\gtrsim \mathcal{G}\circ \mathbb{V}_m[\vec{z}_2]\gtrsim \cdots \gtrsim \mathcal{G}\circ \mathbb{V}_m[\vec{z}_n]\gtrsim \cdots.\nonumber
\end{align}
\end{proof}

\begin{proposition}\label{found limit point}
The point $\vec{x}=(x_1,x_2,\ldots,x_n)$ with $x_i=1$ for each $1\leq i\leq n$ is the \emph{limit point} of the ball $\mathcal{B}_{\frac{1}{2}\mathcal{G}\circ \mathbb{V}_1[\vec{y}]}[\vec{y}]$ for any $\vec{y}=(y_1,y_2,\ldots,y_n)\in \mathbb{R}^n$ with $y_i>1$ for each $1\leq i\leq n$.
\end{proposition}

\begin{proof}
Applying the compression $\mathbb{V}_1:\mathbb{R}^n\longrightarrow \mathbb{R}^n$ on the point $\vec{x}=(x_1,x_2,\ldots,x_n)$ with $x_i=1$ for each $1\leq i\leq n$, we obtain $\mathbb{V}_1[\vec{x}]=(1,1,\ldots,1)$ so that $\mathcal{G}\circ \mathbb{V}_1[\vec{x}]=0$ and the corresponding ball induced under compression $\mathcal{B}_{\frac{1}{2}\mathcal{G}\circ \mathbb{V}_1[\vec{x}]}[\vec{x}]$ contain only the point $\vec{x}$. It follows that the point $\vec{x}$ must be the limit point of the ball $\mathcal{B}_{\frac{1}{2}\mathcal{G}\circ \mathbb{V}_1[\vec{x}]}[\vec{x}]$. We must have
\begin{align}
 \mathcal{B}_{\frac{1}{2}\mathcal{G}\circ \mathbb{V}_1[\vec{x}]}[\vec{x}] \subseteq  \mathcal{B}_{\frac{1}{2}\mathcal{G}\circ \mathbb{V}_1[\vec{y}]}[\vec{y}]\nonumber 
\end{align}for any $\vec{y}=(y_1,y_2,\ldots,y_n)\in \mathbb{R}^n$ with $y_i>1$ for all $1\leq i\leq n$. Suppose that
\begin{align}
 \mathcal{B}_{\frac{1}{2}\mathcal{G}\circ \mathbb{V}_1[\vec{x}]}[\vec{x}] \not \subseteq  \mathcal{B}_{\frac{1}{2}\mathcal{G}\circ \mathbb{V}_1[\vec{y}]}[\vec{y}]\nonumber    
\end{align}
holds for some $\vec{y}=(y_1,y_2,\ldots,y_n)\in \mathbb{R}^n$ with $y_i>1$ for each $1\leq i\leq n$. There must exist some point $\vec{z}\in \mathcal{B}_{\frac{1}{2}\mathcal{G}\circ \mathbb{V}_1[\vec{x}]}[\vec{x}]$ such that $\vec{z}\not \in \mathcal{B}_{\frac{1}{2}\mathcal{G}\circ \mathbb{V}_1[\vec{y}]}[\vec{y}]$. Since $\vec{x}$ is the only point in the ball $\mathcal{B}_{\frac{1}{2}\mathcal{G}\circ \mathbb{V}_1[\vec{x}]}[\vec{x}]$, we get 
\begin{align}
    \vec{x}\not \in \mathcal{B}_{\frac{1}{2}\mathcal{G}\circ \mathbb{V}_1[\vec{y}]}[\vec{y}]\nonumber
\end{align}
which is inconsistent with the fact that $\vec{x}$ is the limit point of the ball.
\end{proof}

\subsection{Admissible points of balls induced under compression}
We introduce the notion of an \emph{admissible} point of the balls induced by points under compression.

\begin{definition}
Let $\vec{y}=(y_1,y_2,\ldots,y_n)\in \mathbb{R}^n$ with $y_i\neq y_j$ for all $1\leq i<j\leq n$. The point $\vec{y}$ is said to be an \emph{admissible} point of the ball $\mathcal{B}_{\frac{1}{2}\mathcal{G}\circ \mathbb{V}_m[\vec{x}]}[\vec{x}]$ if \begin{align}
\bigg|\bigg|\vec{y}-\frac{1}{2}\bigg(x_1+\frac{m}{x_1},\ldots,x_n+\frac{m}{x_n}\bigg)\bigg|\bigg|=\frac{1}{2}\mathcal{G}\circ \mathbb{V}_m[\vec{x}].\nonumber
\end{align}
\end{definition}
\bigskip

The notion of admissible points of balls induced by points under compression encompasses points on the ball. Geometrically, these points sit on the induced ball.

\begin{theorem}\label{admissibletheorem}
Let $\vec{x}\in \mathbb{R}^n$ with $x_i\neq x_j$~($i\neq j$) such that $x_i, y_i \geq 1$ for all $1\leq i\leq n$ and set $m:=m(n)=o(1)$ as $n\longrightarrow \infty$. The point $\vec{y}\in \mathcal{B}_{\frac{1}{2}\mathcal{G}\circ \mathbb{V}_m[\vec{x}]}[\vec{x}]$ with $||\vec{y}||<||\vec{x}||$ such that $||\vec{y}-\vec{x}||<\epsilon$ for sufficiently small $\epsilon>0$  is admissible if and only if 
\begin{align}
\mathcal{B}_{\frac{1}{2}\mathcal{G}\circ \mathbb{V}_m[\vec{y}]}[\vec{y}]=\mathcal{B}_{\frac{1}{2}\mathcal{G}\circ \mathbb{V}_m[\vec{x}]}[\vec{x}]\nonumber
\end{align}
and $$
\mathcal{G}\circ \mathbb{V}_m[\vec{y}]=\mathcal{G}\circ \mathbb{V}_m[\vec{x}].
$$
\end{theorem}

\begin{proof}
Suppose that $\vec{y}\in \mathcal{B}_{\frac{1}{2}\mathcal{G}\circ \mathbb{V}_m[\vec{x}]}[\vec{x}]$ with $||\vec{y}||<||\vec{x}||$ such that $||\vec{y}-\vec{x}||<\epsilon$ for sufficiently small $\epsilon>0$ is admissible. Assume that 
\begin{align}
\mathcal{B}_{\frac{1}{2}\mathcal{G}\circ \mathbb{V}_m[\vec{y}]}[\vec{y}]\neq \mathcal{B}_{\frac{1}{2}\mathcal{G}\circ \mathbb{V}_m[\vec{x}]}[\vec{x}].\nonumber
\end{align}
Without loss of generality, we can choose some $\vec{z}\in \mathcal{B}_{\frac{1}{2}\mathcal{G}\circ \mathbb{V}_m[\vec{x}]}[\vec{x}]$ with $||\vec{z}||<||\vec{x}||$ such that 
\begin{align}
\vec{z}\notin \mathcal{B}_{\frac{1}{2}\mathcal{G}\circ \mathbb{V}_m[\vec{y}]}[\vec{y}].\nonumber
\end{align}
for $||\vec{z}-\vec{x}||<\delta$ for sufficiently small $\delta>0$. Applying Theorem \ref{decider}, we obtain the inequality 
\begin{align}
\mathcal{G}\circ \mathbb{V}_m[\vec{y}]\lesssim \mathcal{G}\circ \mathbb{V}_m[\vec{x}].\nonumber
\end{align}
This violates equality 
$$
\mathcal{G}\circ \mathbb{V}_m[\vec{y}]=\mathcal{G}\circ \mathbb{V}_m[\vec{x}].
$$  
The latter equality of compression gaps follows from the requirement that the balls are indistinguishable. Conversely, suppose that 
\begin{align}
\mathcal{B}_{\frac{1}{2}\mathcal{G}\circ \mathbb{V}_m[\vec{y}]}[\vec{y}]=\mathcal{B}_{\frac{1}{2}\mathcal{G}\circ \mathbb{V}_m[\vec{x}]}[\vec{x}]\nonumber
\end{align}
and 
$$
\mathcal{G}\circ \mathbb{V}_m[\vec{y}]=\mathcal{G}\circ \mathbb{V}_m[\vec{x}].
$$ 
It follows that the point $\vec{y}$ lives on the outer of the two indistinguishable balls and therefore must satisfy the equality
\begin{align}
\bigg|\bigg|\vec{z}-\frac{1}{2}\bigg(y_1+\frac{m}{y_1},\ldots,y_n+\frac{m}{y_n}\bigg)\bigg|\bigg|&=\bigg|\bigg|\vec{z}-\frac{1}{2}\bigg(x_1+\frac{m}{x_1},\ldots,x_n+\frac{m}{x_n}\bigg)\bigg|\bigg|\nonumber \\&=\frac{1}{2}\mathcal{G}\circ \mathbb{V}_m[\vec{x}].\nonumber
\end{align}
It follows that 
\begin{align}
\frac{1}{2}\mathcal{G}\circ \mathbb{V}_m[\vec{x}]&=\bigg|\bigg|\vec{y}-\frac{1}{2}\bigg(x_1+\frac{m}{x_1},\ldots,x_n+\frac{m}{x_n}\bigg)\bigg|\bigg|\nonumber
\end{align}
and $\vec{y}$ is an admissible point.
\end{proof}
\bigskip

Here, we formulate an equivalent notion of the area of the circle induced by points under compression in the plane $\mathbb{R}^2$.
\bigskip

\begin{proposition}\label{circle area}
Let $\vec{x}\in \mathbb{R}^2$ with $x_i\neq 0$ for each $1\leq i\leq 2$. The area of the circle induced by the point $\vec{x}$ under compression $\mathbb{V}_m[\vec{x}]$ of the scale $m$ is
\begin{align}
\delta(\mathbb{V}_m[\vec{x}])=\frac{\pi(\mathcal{G}\circ \mathbb{V}_m[\vec{x}])^2}{4}.\nonumber
\end{align}
\end{proposition}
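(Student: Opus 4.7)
The plan is to unpack the definition of the circle induced by $\vec{x}$ under compression and apply the elementary formula for the area of a disc in the plane. Concretely, when we specialize the definition of $\mathcal{B}_{\frac{1}{2}\mathcal{G}\circ \mathbb{V}_m[\vec{x}]}[\vec{x}]$ to $n=2$, the defining inequality
\[
\left\|\vec{y}-\tfrac{1}{2}\bigl(x_1+\tfrac{m}{x_1},\,x_2+\tfrac{m}{x_2}\bigr)\right\|<\tfrac{1}{2}\,\mathcal{G}\circ \mathbb{V}_m[\vec{x}]
\]
describes an open disc in $\mathbb{R}^2$. The first step is therefore to read off the center, namely $\tfrac{1}{2}(x_1+\tfrac{m}{x_1},\,x_2+\tfrac{m}{x_2})$, and the radius, namely $r=\tfrac{1}{2}\mathcal{G}\circ \mathbb{V}_m[\vec{x}]$.

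The second step is to invoke the standard formula $\delta=\pi r^2$ for the area of a planar disc, which is justified by the fact that the Euclidean norm $\|\cdot\|$ used in the definition of the ball induced by $\vec{x}$ is the usual one, so the circle induced by $\vec{x}$ is an honest Euclidean disc. Substituting $r=\tfrac{1}{2}\mathcal{G}\circ \mathbb{V}_m[\vec{x}]$ yields
\[
\delta(\mathbb{V}_m[\vec{x}])=\pi\left(\tfrac{1}{2}\mathcal{G}\circ \mathbb{V}_m[\vec{x}]\right)^2=\frac{\pi\,(\mathcal{G}\circ \mathbb{V}_m[\vec{x}])^2}{4},
\]
which is the claimed identity.

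There is no real obstacle here; the statement is essentially a rephrasing of the definition of the induced circle combined with the classical formula for the area of a disc. The only mild point to verify is that the condition $x_1,x_2\neq 0$ (and implicitly $x_1\neq x_2$ when one later wants $\mathcal{G}\circ \mathbb{V}_m[\vec{x}]$ to behave well) ensures that the radius $\tfrac{1}{2}\mathcal{G}\circ \mathbb{V}_m[\vec{x}]$ is well-defined and non-negative, so that the resulting disc and its area make sense. Beyond this sanity check the proof is immediate.
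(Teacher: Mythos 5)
Your proof matches the paper's own argument: both read off the radius $r=\tfrac{1}{2}\mathcal{G}\circ \mathbb{V}_m[\vec{x}]$ directly from the definition of the induced ball and then apply the standard area formula $\pi r^2$. Your version is slightly more explicit about identifying the center and verifying well-definedness, but the approach is identical.
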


\begin{proof}
This follows from the definition of the area of a circle and with the observation that the radius $r$ of the circle induced by the point $\vec{x}\in \mathbb{R}^2$ under compression is  
\begin{align}
r:=\frac{\mathcal{G}\circ \mathbb{V}_m[\vec{x}]}{2}.\nonumber
\end{align}
\end{proof}

\section{The upper bound}\label{sec:upper}
In this section, we deduce an improved upper bound for the problem using the geometry developed in the preceding section.

\begin{theorem}
Let $\Delta(s)$ denote the minimal area of the triangle formed by the $s$ points on the unit disk. We have
\begin{align}
\Delta(s)\ll \frac{1}{s^{\frac{3}{2}-\epsilon}}\nonumber
\end{align}
for small $\epsilon:=\epsilon(s)>0$.
\end{theorem}

\begin{proof}
Let $s\geq 4$ and fix the scale $m$ for $1\geq m:=m(s)>0$. Pick arbitrarily a point $(x_1,x_2)=\vec{x}\in\mathbb{R}^2$ with $x_j>1$ for $1\leq j\leq 2$ so that $x_1\neq x_2$ and set $\mathcal{G}\circ \mathbb{V}_m[\vec{x}]<1$. This ensures that the circle induced under compression is contained in some unit disk. We apply the compression $\mathbb{V}_m[\vec{x}]$ of the scale $m$ with $1\geq m>0$ and construct the circle induced by the compression 
\begin{align}
\mathcal{B}_{\frac{1}{2}\mathcal{G}\circ \mathbb{V}_m[\vec{x}]}[\vec{x}]\nonumber
\end{align}
with radius $\frac{(\mathcal{G}\circ \mathbb{V}_m[\vec{x}])}{2}$. It follows from a simple geometric argument that the smallest area of a triangle formed by the $s$ points on the unit disk (compression circle)
\begin{align}
\Delta(s)&\leq \frac{\pi (\mathcal{G}\circ \mathbb{V}_m[\vec{x}])^2}{4s}\nonumber \\&\ll\frac{2\mathrm{sup}(x_j^2)+m^2\log \bigg(1+\frac{1}{\mathrm{inf}(x_j^2)}\bigg)-4m}{4s}.\nonumber
\end{align}
The upper bound follows taking
\begin{align}m:=\frac{1}{2}+\frac{\log ^2s}{4s}<1 \quad \mathrm{and}\quad \mathrm{sup}(x_j):= 1+\frac{\log s}{\sqrt{s}}\nonumber
\end{align}
since points $\vec{x}=(x_1,x_2)$ can only have a compression gap $\mathcal{G}\circ \mathbb{V}_m[\vec{x}]<1$ if $x_1=1+\delta$ and $x_2=1+\epsilon$ for any small $\delta, \epsilon>0$.
\end{proof}
\bigskip

\section{The lower bound}\label{sec:lower}
In this section, we deduce a lower bound for the Heilbronn triangle problem using the compression geometry developed in the preceding section.

\begin{theorem}
Let $\Delta(s)$ denote the minimal area of the triangle formed by the $s$ points on the unit disk. We have
\begin{align}
\Delta(s)\gg \frac{\log s}{s\sqrt{s}}.\nonumber
\end{align}
\end{theorem}

\begin{proof}
Let $s\geq 4$ and fix the scale $m$ with $1\geq m:=m(s)>0$. Pick arbitrarily a point $(x_1,x_2)=\vec{x}\in\mathbb{R}^2$ with $x_j>1$ for $1\leq j\leq 2$ so that $x_1\neq x_2$ and set $\mathcal{G}\circ \mathbb{V}_m[\vec{x}]<1$. This ensures that the circle induced under compression is contained on some unit disk. Apply the compression $\mathbb{V}_m[\vec{x}]$ of the scale $m$ with $1\geq m>0$ and construct the circle induced by compression 
\begin{align}
\mathcal{B}_{\frac{1}{2}\mathcal{G}\circ \mathbb{V}_m[\vec{x}]}[\vec{x}]\nonumber
\end{align}
with radius $\frac{(\mathcal{G}\circ \mathbb{V}_m[\vec{x}])}{2}$. In this circle locate $(s-3)$ admissible points so that the chord joining each pair of adjacent $(s-1)$ admissible points including $\vec{x}$ and $\mathbb{V}_m[\vec{x}]$ is equidistant. We join each of the $(s-1)$ admissible points considered to the center of the circle 
\begin{align}
\vec{y}:=\frac{1}{2}\bigg(x_1+\frac{m}{x_1},x_2+\frac{m}{x_2}\bigg).\nonumber
\end{align}
By Proposition \ref{circle area}, we deduce the area of the circle induced by compression 
\begin{align}
\delta(\mathbb{V}_m[\vec{x}])=\frac{\pi(\mathcal{G}\circ \mathbb{V}_m[\vec{x}])^2}{4}.\nonumber
\end{align}
We join all pairs of adjacent admissible points considered by a chord and produce $(s-1)$ triangles of equal area. We note that we can use the area of each sector formed from this construction to approximate the area of each of the triangles inscribed in the sector as we increase the number of such admissible points on the circle. We deduce
\begin{align}
\mathcal{A}:&=\frac{\pi (\mathcal{G}\circ \mathbb{V}_m[\vec{x}])^2}{4\times (s-1)}\nonumber \\&\gg \frac{2\mathrm{inf}(x_j^2)+m^2\log \bigg(1-\frac{1}{\mathrm{sup}(x_j^2)}\bigg)^{-1}-4m}{4\times s}.\nonumber
\end{align}
The lower bound is obtained by taking
\begin{align}
m:=\frac{1}{2}+\frac{\log ^2s}{4s}<1 \quad \mathrm{and}\quad \mathrm{inf}(x_j):= 1+\frac{\log s}{\sqrt{s}}\nonumber
\end{align}
since points $\vec{x}=(x_1,x_2)$ can only have a compression gap $\mathcal{G}\circ \mathbb{V}_m[\vec{x}]<1$ if $x_1=1+\delta$ and $x_2=1+\epsilon$ for any small $\delta,\epsilon>0$.
\end{proof}

%%%%%%%%%%%%%%%%%%%%%%%%%%%%%%%%%%%%%%%%%%%%%%%%%%%%%%%%%%%%%%%%%%%%%%%%
\footnote{
\par
.}%
%%%%%%%%%%%%%%%%%%%%%%%%%%%%%%%%%%%%%%%%%%%%%%%%%%%%%%%%%%%%%%%%%%%%%%%%

\bibliographystyle{amsplain}

\begin{thebibliography}{10}

\bibitem {komlos1982lower} Koml{\'o}s, J{\'a}nos and Pintz, J{\'a}nos and Szemer{\'e}di, Endre, \textit{A lower bound for Heilbronn's problem}, Journal of the London Mathematical Society, vol. 2:1, 1982, Wiley Online Library,  pp 13--24.\\

\bibitem {komlos1981heilbronn}Koml{\'o}s, J{\'a}nos and Pintz, J{\'a}nos and Szemer{\'e}di, Endre, \textit{On Heilbronn's triangle problem}, Journal of the London Mathematical Society, vol. 1:3, Wiley Online Library, 1951, pp 198--204.\\

\bibitem {roth1972problem}Roth, KF, \textit{On a problem of Heilbronn, III}, Proceedings of the London Mathematical Society, vol. 3:3, Narnia,  1972, pp 543--549.\\

\bibitem {cohen2023new} Cohen, Alex and Pohoata, Cosmin and Zakharov, Dmitrii \textit{A new upper bound for the Heilbronn triangle problem}, arXiv preprint, arXiv:2305.18253, 2023.\\


\bibitem {roth1951problem} K.F Roth \textit{On a problem of Heilbronn}, Journal of the London Mathematical Society,  vol. 1:3, Wiley Online Library, 1951, pp 198--204.\\

\bibitem {roth1976developments} K.F Roth \textit{Developments in Heilbronn's triangle problem}, Advances in Mathematics, vol. 22:3, Elsevier, 1976, 364--385.\\

\bibitem {lefmann2002deterministic} H. Lefmann and N. Schmitt \textit{A deterministic polynomial-time algorithm for Heilbronn's problem in three dimensions}, SIAM Journal on Computing, vol. 31.6, SIAM, 2002, 1926--1947.\\

\bibitem {weisstein2003heilbronn} E.W Weisstein \textit{Heilbronn Triangle Problem}, https://mathworld. wolfram. com/, Wolfram Research, Inc., 2003.

%\bibitem {nathanson2000elementary}Nathanson, M.B, \textit{Graduate Texts in Mathematics},
%New York, NY: Springer New York, 2000.


%\bibitem {roman2005advanced} Roman, Steven and Axler, S and Gehring, FW, \textit{Matrix analysis and applied linear algebra}, vol. 3, Springer, 2005.

%\bibitem {roth1951problem}Roth, Klaus F, \textit{On a problem of Heilbronn}, Journal of the London Mathematical Society, vol. 36,
%Amer. Math. Soc., Providence, RI, 1986, pp. 34--56.

\end{thebibliography}

\end{document}